\theoremstyle{plain}
\newtheorem{thm}{Theorem}[section]
\newtheorem{cor}[thm]{Corollary}
\newtheorem{lem}[thm]{Lemma}
\newtheorem{rem}[thm]{Remark}
\newtheorem{ques}[thm]{Question}
\newtheorem{conj}[thm]{Conjecture}
\def\bbb{\mathbb}
\def\op{\operatorname}
\newcommand{\eps}{\varepsilon}
\renewcommand{\phi}{\varphi}
\newcommand{\N}{\bbb{N}}
\begin{document}
\title[On $p$-adic valuations of colored $p$-ary partitions]{On $p$-adic valuations of colored $p$-ary partitions }
\author{Maciej Ulas and B{\l}a\.{z}ej \.{Z}mija}

\keywords{$p$-adic valuation, partition function, power series} \subjclass[2010]{11P81, 11P83, 11B50}
\thanks{The research of the first author is supported by the grant of the Polish National Science Centre no. UMO-2012/07/E/ST1/00185. During the preparation of the work, the second author was a scholarship holder of the Kartezjusz program funded by the Polish National Center for Research and Development.}

\begin{abstract}
Let $m\in\N_{\geq 2}$ and for given $k\in\N_{+}$ consider the sequence $(A_{m,k}(n))_{n\in\N}$ defined by the power series expansion
$$
\prod_{n=0}^{\infty}\frac{1}{\left(1-x^{m^{n}}\right)^{k}}=\sum_{n=0}^{\infty}A_{m,k}(n)x^{n}.
$$
The number $A_{m,k}(n)$ counts the number of representations of $n$ as sums of powers of $m$, where each summand has one among $k$ colors. In this note we prove that for each $p\in\mathbb{P}_{\geq 3}$ and $s\in\N_{+}$, the $p$-adic valuation of the number $A_{p,(p-1)(p^s-1)}(n)$ is equal to 1 for $n\geq p^s$. We also obtain some results concerning the behaviour of the sequence $(\nu_{p}(A_{p,(p-1)(up^s-1)}(n)))_{n\in\N}$ for fixed $u\in\{2,\ldots,p-1\}$ and $p\geq 3$. Our results generalize the earlier findings obtained for $p=2$ by Gawron, Miska and the first author.

\end{abstract}

\maketitle

\section{Introduction}\label{sec1}
Let $\N$ denotes the set of nonnegative integers and $\mathbb{P}$ be the set of prime numbers. Let $m\in\N_{\geq 2}$ and consider the sequence $(a_{m}(n))_{n\in\N}$ defined by the power series expansion of the infinite product
$$
F_{m}(x)=\prod_{n=0}^{\infty}\frac{1}{1-x^{m^{n}}}=\sum_{n=0}^{\infty}a_{m}(n)x^{n}.
$$
From the general theory of partitions (see the first chapter in the book of Andrews \cite{And}) we know that the sequence $(a_{m}(n))_{n\in\N}$ has a natural combinatorial interpretation. Indeed, we have $a_{m}(0)=1$ and for $n\in\N_{+}$, the number $a_{m}(n)$ counts the number of so called {\it $m$-ary representations} of a non-negative integer $n$, i.e., representations of the form
\begin{equation}\label{rep}
n=\sum_{i=0}^u \eps_i m^i,
\end{equation}
where $u\in\N$ and $\eps_i\in\N$ for $i\in\{0,\ldots,u\}$. Let us note that the $m$-ary representation is generalization of the (unique) representation of the integer $n$ in the base $m$, i.e., the representation (\ref{rep}) with $\eps_{i}\in\{0,\ldots,m-1\}$.

In the sequel we will need a notion of the $p$-adic valuation of an integer, where $p\in\mathbb{P}$. The $p$-adic valuation of an integer $n$, denoted by $\nu_{p}(n)$, is just the highest power of $p$ dividing $n$, i.e., $\nu_{p}(n):=\op{max}\{k\in\N:\;p^{k}\mid n\}$. We also make standard convention that $\nu_{p}(0)=+\infty$.

There are many papers devoted to the study of arithmetic properties of the sequence counting $m$-ary partitions. The case of $m=2$ was investigated by Euler. However, it seems that the first non-trivial result concerning this case was proved by Churchhouse \cite{Chu}. He proved that the sequence of the 2-adic valuations of $(b(n))_{n\in\N}$ is bounded by 2. More precisely, we have $b(0)=1, b(1)=1$ and if $n\geq 2$, we have $\nu_{2}(b(n))=2$ if and only if $n$ or $n-1$ can be written in the form $4^r(2u+1)$ for some $r\in\N_{+}$ and $u\in\N$. In the remaining cases the value of $\nu_{2}(b(n))$ is equal to 1. Churchhouse also stated some conjectures concerning the 2-adic behaviour of the difference $a_{2}(4n)-a_{2}(n)$. These conjectures were independently proved by R{\o}dseth \cite{Rod} and Gupta \cite{Gup} (see also the presentation given in Chapter 10 in the Andrews book). These results motivated the study of the divisibility of the difference $a_{m}(m^{r+1}n)-a_{m}(m^{r}n)$ by powers of $m$ (in the case of odd $m$) or by powers of $m/2$ (in the case of even $m$). See for example the papers of Gupta \cite{Gup1, Gup2}, Andrews \cite{And1}, R{\o}dseth and Sellers \cite{RodSel, RodSel1}.

Another kind of generalizations appeared in a recent work of Gawron, Miska and the first author \cite{GMU}. More precisely, for  $k\in\N_{+}$ one can define the sequence $(A_{m,k}(n))_{n\in\N}$, where
$$
F_{m}(x)^{k}=\prod_{n=0}^{\infty}\frac{1}{\left(1-x^{m^{n}}\right)^{k}}=\sum_{n=0}^{\infty}A_{m,k}(n)x^{n}.
$$
The sequence $(A_{m,k}(n))_{n\in\N}$, as the sequence $(a_{m}(n))_{n\in\N}$, can be interpreted in a natural combinatorial way. More precisely, the number $A_{m,k}(n)$ counts the number of representations of $n$ as sums of powers of $m$, where each summand has one among $k$ colors. Equivalently, we are looking for the number of solutions in non-negative integers of the system $\eps_{i}=\sum_{j=1}^{k}\eps_{i,j}$,  where $\eps_{i}$ comes from the $m$-ary representation given by (\ref{rep}).  In \cite{GMU}, the sequence of the 2-adic valuations of $A_{2,k}(n)$ was investigated. The authors proved that the sequence $(\nu_{2}(A_{2,k}(n)))_{n\in\N}$ with $k=2^{s}-1, s\in\N_{+}$, is bounded by 2. In fact the precise description whether  $\nu_{2}(A_{2,k}(n))=1$ or 2 is presented in the paper. Let $m=p$, where $p$ is a prime number. A question arises: is it possible to find a simple expression for an exponent $k$, such that the sequence $(\nu_{p}(A_{p,k}(n)))_{n\in\N}$ is bounded or even can be described in simple terms? This natural question was the main motivation for writing this paper. As we will see the required generalization can be given and is presented in Theorem \ref{mainthm}.

Let us describe the content of the paper in some details. In Section \ref{sec2} we present some preliminary result needed in the sequel and present the proof of the lemma which contains precise information on the $p$-adic valuation of certain sums which are closely related to the coefficients of the power series expansion of $F_{p}(x)^{-r}$, where $r\in\{1,\ldots,p-1\}$.

In Section \ref{sec3}, among other things we present the computation of the exact value of the $p$-adic valuation of the number $A_{p,(p-1)(p^s-1)}(n)$, where $p\in\mathbb{P}_{\geq 3}$ and $s\in\N_{+}$. More precisely, for $n\geq p^s$, we prove the equality $\nu_{p}(A_{p,(p-1)(p^s-1)}(n))=1$. Note that the number $k=(p-1)(p^s-1)$ for $p=2$ reduces to the exponent considered in \cite{GMU}. However, in the mentioned paper the sequence of the 2-adic valuations of $2^s-1$-colored partitions is not eventually constant (and it is not periodic). This shows striking difference between the case of $p=2$ and $p\geq 3$. Moreover, we obtain some result concerning the boundedness of the sequence $(\nu_{p}(A_{p,(p-1)(up^s-1)}(n)))_{n\in\N}$ for $p\in\mathbb{P}_{\geq 3}, u\in\{2,\ldots,p-1\}$ and $s\in\N_{+}$.


Finally, in the last section we state some conjectures on the shape of exponents $k$ which (according to numerical calculations) lead to bounded (but not-eventually constant) sequences $(\nu_{p}(A_{p,k}(n)))_{n\in\N}$.

\section{Preliminary results and the main lemma}\label{sec2}

Before we state our theorems we will need some notation and preliminary results concerning various objects related to unique representation of integer $n$ in the base $p$.

For a given $p$ (non-necessarily a prime), an integer $n$ and $i\in\{0,\ldots,p-1\}$ we define
$$
N_{p}(i,n)=|\{j:\;n=\sum_{j=0}^k \eps_j p^j,\eps_{j}\in\{0,\ldots,p-1\}\;\mbox{and}\;\eps_{j}=i\}|.
$$
The above number counts the number of the digits equal to $i$ in the base $p$ representation of the integer $n$. From the definition, we immediately deduce the following equalities:
\begin{equation}\label{equ1}
N_{p}(i,0)=0, \quad N_{p}(i,pn+j)=\begin{cases}
N_{p}(i,n),& \mbox{if}\;j\neq i\\
N_{p}(i,n)+1,& \mbox{if}\;j=i
\end{cases}.
\end{equation}

\begin{lem}\label{lem1}
Let $r\in\{1,\ldots,p-1\}$. We have
$$
F_{p}(x)^{-r}=\prod_{n=0}^{\infty}(1-x^{p^{n}})^{r}=\sum_{n=0}^{\infty}D_{p,r}(n)x^{n},
$$
where
\begin{equation}\label{equ2}
D_{p,r}(n)=\prod_{i=0}^{p-1}(-1)^{iN_{p}(i,n)}\binom{r}{i}^{N_{p}(i,n)},
\end{equation}
with the convention that $\binom{a}{b}=0$ for $b>a$ and $0^0=1$. Moreover, for $j\in\{0,\ldots,p-1\}$ and $n\in\N_{+}$ we have
$$
D_{p,r}(pn+j)=(-1)^{j}\binom{r}{j}D_{p,r}(n).
$$
\end{lem}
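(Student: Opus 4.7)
The plan is to extract the coefficient of $x^n$ in the product by expanding each factor and then invoking uniqueness of the base-$p$ representation; the recurrence will then be immediate from (\ref{equ1}).

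First, I would apply the finite binomial theorem to each individual factor:
\begin{equation*}
(1-x^{p^{n}})^{r}=\sum_{k=0}^{r}\binom{r}{k}(-1)^{k}x^{kp^{n}}.
\end{equation*}
Multiplying these out, a monomial $x^{n}$ appears exactly when we choose a nonnegative integer exponent $\eps_{j}$ at level $j$ with $n=\sum_{j\geq 0}\eps_{j}p^{j}$. The crucial point is that since $r\leq p-1$, the binomial coefficient $\binom{r}{\eps_{j}}$ vanishes unless $\eps_{j}\in\{0,\ldots,p-1\}$. Therefore the only tuples $(\eps_{j})$ contributing to the coefficient of $x^{n}$ are those coming from the (unique!) base-$p$ representation of $n$. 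This yields
\begin{equation*}
D_{p,r}(n)=\prod_{j\geq 0}(-1)^{\eps_{j}}\binom{r}{\eps_{j}},
\end{equation*}
and grouping the factors according to the value of the digit $\eps_{j}=i$ — which occurs precisely $N_{p}(i,n)$ times by definition — produces the claimed formula (\ref{equ2}). The convention $0^{0}=1$ takes care of digits $i$ with $N_{p}(i,n)=0$, and $\binom{r}{i}=0$ for $i>r$ is consistent with the fact that such digits never appear in a contributing representation.

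For the recurrence, I would note that for $n\in\N_{+}$ and $j\in\{0,\ldots,p-1\}$, the base-$p$ representation of $pn+j$ is obtained from that of $n$ by appending the digit $j$. Equivalently, by (\ref{equ1}), we have $N_{p}(i,pn+j)=N_{p}(i,n)+\delta_{i,j}$. Substituting into (\ref{equ2}) gives
\begin{equation*}
D_{p,r}(pn+j)=(-1)^{j}\binom{r}{j}\prod_{i=0}^{p-1}(-1)^{iN_{p}(i,n)}\binom{r}{i}^{N_{p}(i,n)}=(-1)^{j}\binom{r}{j}D_{p,r}(n).
\end{equation*}

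The only real subtlety is the restriction $r\leq p-1$: it is exactly what forces each $\eps_{j}$ to be a valid base-$p$ digit, so that uniqueness of the base-$p$ expansion applies and the product over representations collapses to a single term. Without this hypothesis the formula would have to be replaced by a sum over (non-unique) $p$-ary representations, which is the main reason this clean closed form is available only in the range $r\in\{1,\ldots,p-1\}$.
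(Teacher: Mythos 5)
Your proposal is correct and follows essentially the same route as the paper: expand each factor $(1-x^{p^{n}})^{r}$ by the binomial theorem, observe that since $r\leq p-1$ every contributing tuple of exponents is a string of base-$p$ digits so that uniqueness of the base-$p$ representation collapses the coefficient to a single product (zero when some digit exceeds $r$, consistently with the convention $\binom{r}{i}=0$ for $i>r$), and then deduce the recurrence for $D_{p,r}(pn+j)$ directly from the digit-counting recurrence (\ref{equ1}). No gaps.
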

\begin{proof}
The first equality is a simple consequence of the uniqueness of the base $p$ representation of a given number $n$ and the equality
$$
(1-x^{p^{n}})^{r}=\sum_{j=0}^{r}\binom{r}{j}(-1)^{j}x^{p^{n}j}.
$$
Indeed, the number $D_{p,r}(n)$ can be written as
$$
D_{p,r}(n)=
\begin{cases}
\begin{array}{ll}
\prod_{i=0}^{p-1}(-1)^{iN_{p}(i,n)}\binom{r}{i}^{N_{p}(i,n)}, & \text{if $N_{p}(j,n)=0$ for all $j>r$} \\
0, & \text{if $N_{p}(j,n)\neq 0$ for some $j>r$}
\end{array}
\end{cases},
$$
and hence our compact formula.

In order to get the second part of our lemma we note that $N_{p}(i,n)=1$ for $n\in\{0,1\ldots,r\}$ and $N_{p}(i,n)=0$ for $n\in\{r+1\ldots,p-1\}$. With our convention we have that $D_{p,r}(n)=(-1)^{n}\binom{r}{n}$ in this case. Writing now $n=pn'+j$ for some $j\in\{0,\ldots,p-1\}$ we get
\begin{align*}
D_{p,r}(pn'+j)&=\prod_{i=0}^{p-1}(-1)^{iN_{p}(i,pn'+j)}\binom{r}{i}^{N_{p}(i,pn'+j)}\\
           &=\prod_{i\neq j}^{p-1}(-1)^{iN_{p}(i,n')}\binom{r}{i}^{N_{p}(i,n')}(-1)^{j(N_{p}(j,n')+1)}\binom{r}{j}^{N_{p}(j,n')+1}\\
           &=(-1)^{j}\binom{r}{j}\prod_{i=0}^{p-1}(-1)^{iN_{p}(i,n')}\binom{r}{i}^{N_{p}(i,n')}=(-1)^{j}\binom{r}{j}D_{p,r}(n'),
\end{align*}
and our lemma follows.
\end{proof}

The above results are true without the assumption $p\in\mathbb{P}$. However, from now up to the end of the paper we assume that $p$ is an odd prime. Moreover, in order to shorten the notation a bit we define the number $D_{p}(n)$ as
$$
D_{p}(n):=D_{p,p-1}(n).
$$
In particular $D_{p}(n)\neq 0$ for all $n\in\N$.

\begin{lem}\label{lem2}
Let $k\in\N_{+}$ and suppose that $p-1|k$. Then
$$
F_{p}(x)^{k}\equiv (1-x)^{\frac{k}{p-1}}\pmod{p^{\nu_{p}(k)+1}}.
$$
\end{lem}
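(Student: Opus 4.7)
The plan is to reduce the statement to a base-case congruence modulo $p$ and then bootstrap to the full power of $p$ via the binomial theorem. Writing $k=(p-1)m$ and noting that $p\nmid p-1$ gives $\nu_{p}(k)=\nu_{p}(m)$, so the goal becomes
$$F_{p}(x)^{(p-1)m}\equiv (1-x)^{m}\pmod{p^{\nu_{p}(m)+1}}.$$

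First I would establish the base case $F_{p}(x)^{p-1}\equiv 1-x\pmod{p}$ by a Frobenius-style telescoping. Fermat's little theorem gives $(1-x^{p^{n}})^{p}\equiv 1-x^{p^{n+1}}\pmod{p}$, and therefore
$$(1-x^{p^{n}})^{-(p-1)}=\frac{1-x^{p^{n}}}{(1-x^{p^{n}})^{p}}\equiv \frac{1-x^{p^{n}}}{1-x^{p^{n+1}}}\pmod{p}.$$
Multiplying over $n\geq 0$, the right-hand side collapses telescopically to $1-x$, which yields the base-case congruence for $F_{p}(x)^{p-1}=\prod_{n=0}^{\infty}(1-x^{p^{n}})^{-(p-1)}$.

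For the lift, since the coefficients of $F_{p}(x)$ are nonnegative integers, so are those of $F_{p}(x)^{p-1}$, and the base case lets me write $F_{p}(x)^{p-1}=(1-x)+pR(x)$ for some $R(x)\in\Z[[x]]$. The binomial theorem then gives
$$F_{p}(x)^{(p-1)m}=\bigl((1-x)+pR(x)\bigr)^{m}=(1-x)^{m}+\sum_{j=1}^{m}\binom{m}{j}(1-x)^{m-j}p^{j}R(x)^{j},$$
so it suffices to show $\nu_{p}\bigl(\binom{m}{j}\bigr)+j\geq \nu_{p}(m)+1$ for every $j\in\{1,\ldots,m\}$. The identity $\binom{m}{j}=\frac{m}{j}\binom{m-1}{j-1}$ gives $\nu_{p}\binom{m}{j}\geq \nu_{p}(m)-\nu_{p}(j)$, and the elementary bound $p^{\nu_{p}(j)}\leq j$ yields $\nu_{p}(j)\leq j-1$ for every $j\geq 1$; adding $j$ to the first inequality and using $j-\nu_{p}(j)\geq 1$ completes the estimate.

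The only genuinely non-routine ingredient is the telescoping base case; once that is in place the lift is a standard $p$-adic binomial expansion. A brute-force alternative, extracting the constant and linear terms of $F_{p}(x)^{p-1}$ modulo $p$ directly from the formula for $D_{p,p-1}(n)$ in Lemma \ref{lem1}, should in principle work but would be considerably less transparent than the Frobenius argument above.
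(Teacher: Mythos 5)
Your proof is correct, and it shares the paper's overall structure --- first establish $F_{p}(x)^{p-1}\equiv 1-x\pmod{p}$, then lift to modulus $p^{\nu_{p}(k)+1}$ --- but the two halves are realized differently. For the base case the paper uses the functional equation $(1-x)F_{p}(x)=F_{p}(x^{p})$ together with the Frobenius congruence $F_{p}(x^{p})\equiv F_{p}(x)^{p}\pmod{p}$; your telescoping product is exactly this identity unwound factor by factor, so there is no substantive difference there. The lift is where you genuinely diverge: the paper writes $k=p^{\nu_{p}(k)}(p-1)\delta$, invokes the standard fact that $a\equiv b\pmod{p}$ implies $a^{p^{t}}\equiv b^{p^{t}}\pmod{p^{t+1}}$ with $t=\nu_{p}(k)$, and then raises to the power $\delta$ (which preserves the modulus); you instead expand $\bigl((1-x)+pR(x)\bigr)^{m}$ by the binomial theorem and verify $\nu_{p}\bigl(\binom{m}{j}\bigr)+j\geq \nu_{p}(m)+1$ via $j\binom{m}{j}=m\binom{m-1}{j-1}$ and $j-\nu_{p}(j)\geq 1$. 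Both estimates are correct; the paper's route is shorter once the $p$-th-power lifting lemma is granted, while yours is self-contained and makes explicit exactly where the extra power of $p$ comes from. Two minor quibbles: the congruence $(1-y)^{p}\equiv 1-y^{p}\pmod{p}$ is the Frobenius (``freshman's dream'') identity rather than Fermat's little theorem, and the nonnegativity of the coefficients is irrelevant --- integrality alone lets you write $F_{p}(x)^{p-1}=(1-x)+pR(x)$ with $R(x)\in\Z[[x]]$.
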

\begin{proof}
Let us recall that if the congruence $a\equiv b\pmod{p}$ holds, then for any $t\in\N_{+}$ we have $a^{p^{t}}\equiv b^{p^{t}}\pmod{p^{t+1}}$.

From the definition of the function $F_{p}(x)$ we get
$$
(1-x)F_{p}(x)=\prod_{n=1}^{\infty}\frac{1}{1-x^{p^{n}}}=F_{p}(x^p),
$$
and thus we immediately deduce that $F_{p}(x)^{p-1}\equiv (1-x)\pmod{p}$. Invoking now the property mentioned above with $t=\nu_{p}(k)$ we get
$$
F_{p}(x)^{p^{\nu_{p}(k)}(p-1)}\equiv (1-x)^{p^{\nu_{p}(k)}}\pmod{p^{\nu_{p}(k)+1}}.
$$
Write $k=p^{\nu_{p}(k)}(p-1)\delta$. Raising the both sides of the last congruence to the power $\delta$ we get the result.

\end{proof}


We are ready to present the crucial lemma which will be the main tool in our study of the $p$-adic valuation of the number $A_{p,(p-1)(up^s-1)}(n)$ in the sequel. More precisely, the lemma contains information about behaviour of the $p$-adic valuation of the expression
$$
\sum_{i=0}^{u}(-1)^{i}\binom{u}{i}D_{p}(n-i).
$$

\begin{lem}\label{lem3}
Let $p\geq 3$ be prime and $u\in\{1,\ldots ,p-1\}$. Let $n\geq p$ be of the form $n=n''p^{s+1}+kp^{s}+j$ for some $n''\in\N, k\in\{1, \ldots, p-1\}, s\in\N_{+}$ and $j\in\{0,\ldots, p-1\}$. Then the following equality holds:
$$
\nu_{p}\left(\sum_{i=0}^{u}(-1)^{i}\binom{u}{i}D_{p}(n-i)\right)=\nu_{p}\left((p-k)\binom{p+u-1}{j}+k\binom{p+u-1}{p+j}\right).
$$
In particular:
\begin{enumerate}

\item[(a)] If $u=1$, then
$$
\nu_{p}\left(\sum_{i=0}^{u}(-1)^{i}\binom{u}{i}D_{p}(n-i)\right)=\nu_{p}(D_{p}(n)-D_{p}(n-1))=1,
$$
for any $n\in\N_{+}$.
\item[(b)] If $j\geq u$, then
we have the equality
$$
\nu_{p}\left(\sum_{i=0}^{u}(-1)^{i}\binom{u}{i}D_{p}(n-i)\right)=1.
$$
\item[(c)] If $u\geq 2$, then there exist $j, k\in\{0,\ldots,p-1\}, k\neq 0$, such that
we have
$$
\nu_{p}\left(\sum_{i=0}^{u}(-1)^{i}\binom{u}{i}D_{p}(n-i)\right)\geq 2.
$$

\end{enumerate}
\end{lem}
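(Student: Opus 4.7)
The plan is to reduce $S := \sum_{i=0}^u (-1)^i \binom{u}{i} D_p(n-i)$ to a closed form by splitting the index $i$ according to whether the subtraction $n - i$ requires a borrow past the units digit. For $0 \leq i \leq j$, the units digit of $n - i$ is $j - i$, so Lemma \ref{lem1} gives $D_p(n-i) = (-1)^{j-i}\binom{p-1}{j-i}D_p(M)$ with $M := n''p^s + kp^{s-1}$. For $j < i \leq u$, writing $i = j + i'$ with $1 \leq i' \leq u-j$, the borrow cascades through the $s-1$ zero digits of $n$ at positions $1, \dots, s-1$, so $n - i$ has units digit $p - i'$ and, after dividing by $p$, the higher part is $M - 1$; hence $D_p(n-i) = (-1)^{p-i'}\binom{p-1}{p-i'}D_p(M-1)$. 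Two applications of Vandermonde's convolution then collapse the two partial sums into
\[
S = (-1)^j\left[D_p(M)\binom{p+u-1}{j} - D_p(M-1)\binom{p+u-1}{p+j}\right].
\]

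The key step is the identity $kD_p(M) + (p-k)D_p(M-1) = 0$. To prove it, I would peel off low-order digits using Lemma \ref{lem1}: the base-$p$ expansion of $M$ consists of $s-1$ zero digits followed by the digit $k$ at position $s-1$, while $M-1$'s consists of $s-1$ copies of $p-1$ followed by the digit $k-1$ at position $s-1$; both share the higher-order digits coming from $n''$. The common factor $D_p(n'')$ cancels, along with the intermediate digit contributions (since $\binom{p-1}{0} = \binom{p-1}{p-1} = 1$ and $(-1)^{(p-1)(s-1)} = 1$), leaving
\[
\frac{D_p(M)}{D_p(M-1)} = -\frac{\binom{p-1}{k}}{\binom{p-1}{k-1}} = -\frac{p-k}{k}.
\]
Substituting back yields $S = -\frac{(-1)^j D_p(M-1)}{k}\left[(p-k)\binom{p+u-1}{j} + k\binom{p+u-1}{p+j}\right]$, and the main equality of valuations follows since $\gcd(k,p) = 1$ and $D_p(M-1) \equiv 1 \pmod p$ is a $p$-adic unit (an easy consequence of $\binom{p-1}{i} \equiv (-1)^i \pmod p$).

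The three special cases are essentially corollaries of this main formula. For (a), with $u = 1$, the right-hand side equals $p$ (if $j = 0$) or $(p-k)\binom{p}{j}$ (if $1 \leq j \leq p-1$), each of $p$-adic valuation exactly $1$; the small range $n \in \{1,\dots,p-1\}$ not covered by the hypothesis $n \geq p$ is handled by the direct computation $D_p(n) - D_p(n-1) = (-1)^n\binom{p}{n}$. For (b), $j \geq u$ forces $\binom{p+u-1}{p+j} = 0$, and Kummer's theorem gives $\nu_p\binom{p+u-1}{j} = 1$. For (c), I would take $j = 0$ and use
\[
\binom{p+u-1}{p} = \prod_{i=1}^{u-1}\left(1 + \frac{p}{i}\right) \equiv 1 + pH_{u-1} \pmod{p^2}
\]
to reduce the right-hand side modulo $p^2$ to $p(1 + kH_{u-1})$, so that the choice $k \equiv -H_{u-1}^{-1} \pmod p$ yields $\nu_p \geq 2$ whenever $H_{u-1}$ is a $p$-adic unit. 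The main obstacle lies in the exceptional case $H_{u-1} \equiv 0 \pmod p$ (first occurring at $p = 11$, $u = 8$, where $H_7 \equiv 0 \pmod{11}$): here one must analyze a different $j \in \{1,\dots,u-1\}$ and verify by a combinatorial argument on the coefficients of $L_p(x)(1+x)^{u-1} \pmod p$, where $L_p(x) := \sum_{i=1}^{p-1} \frac{(-1)^{i-1}}{i}x^i$, that at least one such $j$ always produces a solvable linear congruence in $k$.
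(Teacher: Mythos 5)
Your derivation of the main identity is correct and is essentially the paper's own argument: split the sum at $i=j$ according to whether a borrow occurs, apply the recursion of Lemma \ref{lem1} and Vandermonde to get $S=(-1)^j\bigl[D_p(n')\binom{p+u-1}{j}-D_p(n'-1)\binom{p+u-1}{p+j}\bigr]$ with $n'=n''p^s+kp^{s-1}$, and then peel off the digits between positions $0$ and $s-1$ of $n'$ and $n'-1$ (all $0$'s, resp.\ all $(p-1)$'s) to reduce to the ratio $D_p(n')/D_p(n'-1)=-\binom{p-1}{k}/\binom{p-1}{k-1}=-(p-k)/k$; the paper organizes this as a factorization $\frac{1}{k}\binom{p-1}{k-1}\bigl[(p-k)\binom{p+u-1}{j}+k\binom{p+u-1}{p+j}\bigr]$ rather than a ratio identity, but the content is identical. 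Parts (a) and (b) are also handled exactly as in the paper.

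The genuine gap is in part (c). Your $j=0$ computation gives $(p-k)+k\binom{p+u-1}{p}\equiv p\bigl(1+kH_{u-1}\bigr)\pmod{p^2}$ and lets you solve for $k$ only when $H_{u-1}=\sum_{i=1}^{u-1}1/i\not\equiv 0\pmod p$. You correctly identify that this hypothesis can fail (indeed $H_7\equiv 0\pmod{11}$, so $p=11$, $u=8$ is a real exceptional case, not a vacuous one), but your proposed repair --- ``analyze a different $j\in\{1,\dots,u-1\}$ and verify by a combinatorial argument on the coefficients of $L_p(x)(1+x)^{u-1}$'' --- is only a plan, not a proof; as written, part (c) is unproved for those $(p,u)$. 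The paper closes exactly this hole with a short explicit argument: it computes the $j=1$ case as well, namely $\binom{p+u-1}{1}-\binom{p+u-1}{p+1}\equiv -p(u-1)(p+u-1)\sum_{i=2}^{u-2}\tfrac1i\pmod{p^2}$, and shows that the two cases $j=0$ and $j=1$ cannot both be $\equiv 0\pmod{p^2}$, because the linear combination $K=(u-1)(p+u-1)\bigl(\binom{p+u-1}{0}-\binom{p+u-1}{p}\bigr)-\bigl(\binom{p+u-1}{1}-\binom{p+u-1}{p+1}\bigr)$ telescopes to $-pu(p+u-1)\pmod{p^2}$, which has $p$-adic valuation exactly $1$ since $u\geq 2$ forces $p\nmid u$ and $p\nmid(u-1)$. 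You would need to supply this (or an equivalent) second computation to complete part (c).
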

\begin{proof}
Let us suppose that $n\in\{u,\ldots,p-1\}$. Then
\begin{align*}
\sum_{i=0}^{u}(-1)^{i}\binom{u}{i}D_{p}(n-i)&=\sum_{i=0}^{u}(-1)^{i}\binom{u}{i}(-1)^{n-i}\binom{p-1}{n-i}\\
                                            &=(-1)^{n}\sum_{i=0}^{u}\binom{u}{i}\binom{p-1}{n-i}=(-1)^{n}\binom{p+u-1}{n}.
\end{align*}

Let $n\geq p$ and write $n=pn'+j$ for some $j\in\{0,\ldots,p-1\}$ and $n'\in\N_{+}$. We have
\begin{align*}
\sum_{i=0}^{u}&(-1)^{i}\binom{u}{i}D_{p}(n-i)= \sum_{i=0}^{u}(-1)^{i}\binom{u}{i}D_{p}(pn'+j-i) \\
 = & \sum_{i=0}^{j}(-1)^{i}\binom{u}{i}D_{p}(pn'+j-i)+\sum_{i=j+1}^{u}(-1)^{i}\binom{u}{i}D_{p}(p(n'-1)+p+j-i) \\
 = & \sum_{i=0}^{j}(-1)^{i}\binom{u}{i}(-1)^{j-i}\binom{p-1}{j-i}D_{p}(n')+\sum_{i=j+1}^{u}(-1)^{i}\binom{u}{i}(-1)^{p+j-i}\binom{p-1}{p+j-i}D_{p}(n'-1) \\
  = & (-1)^{j}D_{p}(n')\sum_{i=0}^{j}\binom{u}{i}\binom{p-1}{j-i}+(-1)^{p+j}D_{p}(n'-1)\sum_{i=j+1}^{u}\binom{u}{i}\binom{p-1}{p+j-i} \\
  = & (-1)^{j}D_{p}(n')\binom{p+u-1}{j}-(-1)^{j}D_{p}(n'-1)\binom{p+u-1}{p+j} \\
  = & (-1)^{j}\left[D_{p}(n')\binom{p+u-1}{j}-D_{p}(n'-1)\binom{p+u-1}{p+j}\right].
\end{align*}
Write $n'=n''p+k$. If $k\geq 1$, then
\begin{align*}
D_{p}(n')\binom{p+u-1}{j}&-D_{p}(n'-1)\binom{p+u-1}{p+j}\\
                         &=  D_{p}(pn''+k)\binom{p+u-1}{j}-D_{p}(pn'+k-1)\binom{p+u-1}{p+j} \\
                         &=  D_{p}(n'')(-1)^{k}\left[\binom{p-1}{k}\binom{p+u-1}{j}+\binom{p-1}{k-1}\binom{p+u-1}{p+j}\right].
\end{align*}
If $k=0$, then
\begin{align*}
D_{p}(n')&\binom{p+u-1}{j}-D_{p}(n'-1)\binom{p+u-1}{p+j}\\\
         &=  D_{p}(pn'')\binom{p+u-1}{j}-D_{p}(p(n''-1)+p-1)\binom{p+u-1}{p+j} \\
         &=  D(n'')\binom{p+u-1}{j}-D_{p}(n''-1)\binom{p+u-1}{p+j}.
\end{align*}
Hence if $n'=p^{s}(ap+k)$ with $k\geq 1$, then
\begin{align*}
D_{p}(n')&\binom{p+u-1}{j}-D_{p}(n'-1)\binom{p+u-1}{p+j}\\
         &=D_{p}(a)(-1)^{k}\left[\binom{p-1}{k}\binom{p+u-1}{j}+\binom{p-1}{k-1}\binom{p+u-1}{p+j}\right].
\end{align*}
If $n'=kp^{s}$ with $1<k\leq p-1$, then
\begin{align*}
D_{p}(n')&\binom{p+u-1}{j}-D_{p}(n'-1)\binom{p+u-1}{p+j}\\
         &= D_{p}(k)\binom{p+u-1}{j}-D_{p}(k-1)\binom{p+u-1}{p+j} \\
         &= (-1)^{k}\left[\binom{p-1}{k}\binom{p+u-1}{j}+\binom{p-1}{k-1}\binom{p+u-1}{p+j}\right].
\end{align*}
Finally, for $n'=p^{s}$, we get
\begin{align*}
D_{p}(n')&\binom{p+u-1}{j}-D_{p}(n'-1)\binom{p+u-1}{p+j}\\
         &= D_{p}(p)\binom{p+u-1}{j}-D_{p}(p-1)\binom{p+u-1}{p+j} \\
         &= -(p-1)\binom{p+u-1}{j}-\binom{p+u-1}{p+j} \\
         &= -\left[\binom{p-1}{1}\binom{p+u-1}{j}+\binom{p-1}{1-1}\binom{p+u-1}{p+j}\right].
\end{align*}
Thus, in any case we get that if $n=n''p^{s+1}+kp^{s}+j$ for some $n''\in\N$, $k\in\{1,\ldots ,p-1\}$, $s\in\N_{+}$ and $j\in\{0,\ldots ,p-1\}$, then
$$
\nu_{p}\left(\sum_{i=0}^{u}(-1)^{i}\binom{u}{i}D_{p}(n-i)\right)=\nu_{p}\left(\binom{p-1}{k}\binom{p+u-1}{j}+\binom{p-1}{k-1}\binom{p+u-1}{p+j}\right).
$$
The equality
\begin{align*}
\binom{p-1}{k}&\binom{p+u-1}{j}+\binom{p-1}{k-1}\binom{p+u-1}{p+j}\\
              &= \frac{p-k}{k}\binom{p-1}{k-1}\binom{p+u-1}{j}+\binom{p-1}{k-1}\binom{p+u-1}{p+j} \\
              &= \frac{\binom{p-1}{k-1}}{k}\left[(p-k)\binom{p+u-1}{j}+k\binom{p+u-1}{p+j}\right]
\end{align*}
implies
$$
\nu_{p}\left(\sum_{i=0}^{u}(-1)^{i}\binom{u}{i}D_{p}(n-i)\right)=\nu_{p}\left((p-k)\binom{p+u-1}{j}+k\binom{p+u-1}{p+j}\right).
$$
This finishes the proof of the main part of our lemma.

In order to get the part (a), it is enough to observe that for $u=1$ we have
$$
(p-k)\binom{p+u-1}{j}+k\binom{p+u-1}{p+j}=
\begin{cases}\begin{array}{ll}
(p-k)\binom{p}{j}, & \text{for } j\geq 1 \\
p, & \text{for } j=0
\end{array}
\end{cases}.
$$
The result follows since $1\leq k\leq p-1$.

If $u$ is arbitrary and $j\geq u$, then
\begin{align*}
\nu_{p}\left((p-k)\binom{p+u-1}{j}+k\binom{p+u-1}{p+j}\right)&=\nu_{p}\left((p-k)\binom{p+u-1}{j}\right)\\
                                  &=\nu_{p}\left(\binom{p+u-1}{j}\right)=1.
\end{align*}
Hence we get the part (b).

In order to prove the part (c) let us fix $p$ and $u$. We want to find $j$ and $k$ such that
\begin{equation}\label{spec}
\nu_{p}\left((p-k)\binom{p+u-1}{j}+k\binom{p+u-1}{p+j}\right)\geq 2.
\end{equation}
Let us write
$$
(p-k)\binom{p+u-1}{j}+k\binom{p+u-1}{p+j}=p\binom{p+u-1}{j}-k\left(\binom{p+u-1}{j}-\binom{p+u-1}{p+j}\right).
$$
Thus, we only need to find  $j\leq u-1$ and $k\in\{1,\ldots ,p-1\}$ such that
$$
k\left(\binom{p+u-1}{j}-\binom{p+u-1}{p+j}\right)\equiv p\binom{p+u-1}{j}\pmod{p^{2}},
$$
or equivalently
$$
k\frac{\binom{p+u-1}{j}-\binom{p+u-1}{p+j}}{p}\equiv \binom{p+u-1}{j}\pmod p.
$$
In consequence, the sufficient condition for the inequality (\ref{spec}) to hold, is the condition
$$
p^{2}\nmid \binom{p+u-1}{j}-\binom{p+u-1}{p+j}.
$$
We prove that this is satisfied in at least one case $j=0$ or $j=1$. Let us observe that if $j=0$, then
\begin{align*}
\binom{p+u-1}{0}-\binom{p+u-1}{p}= & 1-\frac{(p+u-1)!}{p!(u-1)!}=1-\frac{(p+1)\ldots (p+u-1)}{(u-1)!} \\
 = & \frac{1}{(u-1)!}\left((u-1)!-(p+1)\ldots (p+u-1)\right) \\
 \equiv & \frac{1}{(u-1)!}\bigg((u-1)!-\bigg((u-1)!+p\bigg(\sum_{i=1}^{u-1}\prod_{\substack{j=1 \\ j\neq i}}^{u-1}j\bigg)\bigg)\bigg) \\
 = &-\frac{p}{(u-1)!}\bigg(\sum_{i=1}^{u-1}\prod_{\substack{j=1 \\ j\neq i}}^{u-1}j\bigg)=-p\left(\sum_{i=1}^{u-1}\frac{1}{i}\right)\pmod{p^{2}}.
\end{align*}
Similarly, for $j=1$ we have
$$
\binom{p+u-1}{1}-\binom{p+u-1}{p+1}\equiv -p(u-1)(p+u-1)\left(\sum_{i=2}^{u-2}\frac{1}{i}\right)\pmod{p^{2}}.
$$
Thus, if $p^{2}|\binom{p+u-1}{j}-\binom{p+u-1}{p+j}$ for $j=0$ and $j=1$, then $p^{2}$ divides also the number
$$
K:=(u-1)(p+u-1) \left(\binom{p+u-1}{0}-\binom{p+u-1}{p}\right)-\left(\binom{p+u-1}{1}-\binom{p+u-1}{p+1}\right).
$$
However,
\begin{align*}
 K \equiv & -p(u-1)(p+u-1)\left(\sum_{i=1}^{u-1}\frac{1}{i}-\sum_{i=2}^{u-2}\frac{1}{i}\right) \\
 = & -p(u-1)(p+u-1)\left(1+\frac{1}{u-1}\right)=-pu(p+u-1) \pmod{p^{2}},
\end{align*}
and we get a contradiction. The result follows.
\end{proof}

\section{The results}\label{sec3}

Now, we are ready to prove the following

\begin{thm}\label{mainthm}
Let $p\in\mathbb{P}_{\geq 3}$, $u\in\{1,\ldots ,p-1\}$ and $s\in\N_{+}$.
\begin{enumerate}
\item[(a)] If $n>up^{s}$, then
$$
\nu_{p}(A_{p,(p-1)(up^s-1)}(n))\geq 1.
$$
\item[(b)] If $n>p^{s}$, then
$$
\nu_{p}(A_{p,(p-1)(p^s-1)}(n))= 1.
$$
\item[(c)] If $u\geq 2$, then
$$
\nu_{p}(A_{p,(p-1)(up^s-1)}(n))= 1
$$
for infinitely many $n$.
\item[(d)] If $u\geq 2$, then
$$
\nu_{p}(A_{p,(p-1)(up^s-1)}(n))\geq 2
$$
for infinitely many $n$.
\item[(e)] If $s\geq 2$ and $n\geq p^{s+1}$ with the unique base $p$-representation $n=\sum_{i=0}^{v}\eps_{i}p^{i}$ and
$$
\nu_{p}(A_{p,(p-1)(up^s-1)}(n))\in\{1,2\},
$$
then the value of $\nu_{p}(A_{p,(p-1)(up^s-1)}(n))$ depends only on the coefficient $\eps_{s}$ and the first non-zero coefficient $\eps_{t}$ with $t>s$.
\item[(f)] If $s\geq 2$ and
$$
\nu_{p}(A_{p,(p-1)(up^s-1)}(n))\leq s
$$
for $n>up^{s}$, then also
$$
\nu_{p}(A_{p,(p-1)(up^s-1)}(pn))=\nu_{p}(A_{p,(p-1)(up^s-1)}(pn+1))=\ldots =\nu_{p}(A_{p,(p-1)(up^s-1)}(pn+(p-1))).
$$
\end{enumerate}
\end{thm}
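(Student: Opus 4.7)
My plan is to reduce $F_p(x)^{(p-1)(up^s-1)}$ modulo $p^{s+1}$ using Lemma \ref{lem2}, and then extract the $p$-adic valuations via Lemma \ref{lem3}. Factoring $F_p(x)^{(p-1)(up^s-1)}=F_p(x)^{(p-1)up^s}\cdot F_p(x)^{-(p-1)}$ and using $\nu_p((p-1)up^s)=s$, Lemma \ref{lem2} yields
$$
A_{p,(p-1)(up^s-1)}(n)\equiv \sum_{i=0}^{up^s}(-1)^{i}\binom{up^s}{i}D_p(n-i)\pmod{p^{s+1}}, \qquad (\star)
$$
which is sharp enough since every claim in (a)--(f) concerns $\nu_p$ at most $s$. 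Reducing $(\star)$ modulo $p$, Lucas' theorem keeps only $i=jp^s$ terms, and iterating the Lemma \ref{lem1} recurrence $s$ times on $n=n'p^s+r$ with $0\le r<p^s$ gives
$$
A_{p,(p-1)(up^s-1)}(n)\equiv C(r)\sum_{j=0}^{u}(-1)^{j}\binom{u}{j}D_p(n'-j)\pmod{p},
$$
with $C(r):=\prod_{i<s}(-1)^{r_i}\binom{p-1}{r_i}$ a $p$-adic unit and $n'\ge u$ under $n>up^s$. The main formula of Lemma \ref{lem3} makes the inner sum divisible by $p$, giving (a); choosing $n$ so that $n'$ has last base-$p$ digit $\ge u$ invokes Lemma \ref{lem3}(b) to obtain inner-sum valuation $1$ for infinitely many $n$ (feeding (c)), and Lemma \ref{lem3}(c) provides infinitely many $n$ with inner-sum valuation $\ge 2$ (feeding (d)).

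The main obstacle is part (b), where one needs the exact equality $\nu_p(A)=1$; the same refinement will also lift (c) and (d) from their inner-sum versions. My plan is to first establish the auxiliary congruence
$$
(1-x)^{p^s}\equiv (1-x^{p^{s-1}})^{p}\pmod{p^{2}},
$$
by induction on $s$ from $(1-x)^p=1-x^p+ph(x)$ with $h\in\Z[x]$, using $\nu_p\bigl(\binom{p^{s-1}}{j}p^{j}\bigr)\ge s$ for $j\ge 1$. Substituting into $(\star)$ and iterating the $D_p$-recurrence $s-1$ times gives, for $n=qp^{s-1}+r$ and $q\ge p$,
$$
A_{p,(p-1)(p^s-1)}(n)\equiv C'(r)\sum_{t=0}^{p}(-1)^{t}\binom{p}{t}D_p(q-t)\pmod{p^{2}}.
$$
Using $\binom{p}{t}/p\equiv(-1)^{t-1}/t\pmod p$ rewrites the inner sum modulo $p^2$ as $D_p(q)-D_p(q-p)-p\sum_{t=1}^{p-1}D_p(q-t)/t$. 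The technical heart of the proof is to show that the harmonic correction vanishes modulo $p$: writing $q=pq'+j'$ and applying the $D_p$-recurrence to each $D_p(q-t)$ expresses $\sum_{t=1}^{p-1}D_p(q-t)/t\pmod p$ as $H(j')\bigl(D_p(q')-D_p(q'-1)\bigr)$ with $H(j')=\sum_{t=1}^{j'}1/t$, after invoking the identity $\sum_{t=1}^{p-1}1/t\equiv 0\pmod p$; since $D_p(q')-D_p(q'-1)$ is divisible by $p$ by Lemma \ref{lem3}(a), the correction is $O(p^2)$. Hence the inner sum is congruent modulo $p^{2}$ to $D_p(q)-D_p(q-p)$, which has $\nu_p$ exactly $1$ by Lemma \ref{lem3}(a). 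A parallel mod-$p^{2}$ refinement, based on $(1-x)^{up^s}\equiv (1-x^{p^s})^{u}+up(1-x^{p^s})^{u-1}K(x)\pmod{p^{2}}$, handles (c) and (d) from the inner-sum data.

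For (e) and (f), I would re-expand $(\star)$ by writing each $D_p(pn+j-i)$ in terms of $D_p(n-\ell)$ and $D_p(n-\ell-1)$ via Lemma \ref{lem1}, then collapse the resulting double sum with Vandermonde's identity to obtain
$$
A_{p,(p-1)(up^s-1)}(pn+j)\equiv (-1)^{j}\sum_{\ell\ge 0}(-1)^{\ell}\binom{up^s+p-1}{\ell p+j}D_p(n-\ell)\pmod{p^{s+1}}.
$$
Under the hypothesis of (f), this congruence determines the valuation exactly, and the invariance across $\{pn,\ldots,pn+p-1\}$ reduces to showing that $(-1)^{j}\binom{up^s+p-1}{\ell p+j}$ differs from $\binom{up^s+p-1}{\ell p}$ by a $p$-adic unit (using $\binom{p-1}{j}\equiv(-1)^{j}\pmod{p}$ from Lucas together with a mod $p^{s+1}$ lift). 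For (e), restricting to $\nu_p(A)\in\{1,2\}$ forces the contributing $\ell$'s, via Lucas, to be compatible with the digit pattern of $up^s+p-1$, and Lemma \ref{lem3}'s formula $\nu_p\bigl((p-k)\binom{p+u-1}{j}+k\binom{p+u-1}{p+j}\bigr)$ then involves only the digit $j=\varepsilon_s$ and the first non-vanishing higher digit $k=\varepsilon_t$, as asserted.
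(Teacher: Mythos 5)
Your reduction to the key congruence $A_{p,(p-1)(up^{s}-1)}(n)\equiv\sum_{i=0}^{up^{s}}(-1)^{i}\binom{up^{s}}{i}D_{p}(n-i)\pmod{p^{s+1}}$ matches the paper, but your execution of (a)--(d) is genuinely different. The paper proves a single congruence that does all the work for (a)--(e): using $p^{2}\mid\binom{up^{s}}{i}$ whenever $p^{s-1}\nmid i$, it collapses the sum modulo $p^{2}$ to $C_{1}\sum_{i=0}^{u}(-1)^{i}\binom{u}{i}D_{p}(n_{s}-i)$ with $C_{1}$ a unit, and then reads everything off from Lemma \ref{lem3}. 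Your route to (b) via $(1-x)^{p^{s}}\equiv(1-x^{p^{s-1}})^{p}\pmod{p^{2}}$ and the harmonic cancellation $\sum_{t=1}^{p-1}D_{p}(q-t)/t\equiv H(j')\bigl(D_{p}(q')-D_{p}(q'-1)\bigr)\equiv 0\pmod{p}$ is correct and rather elegant; the endpoint $\nu_{p}(D_{p}(q)-D_{p}(q-p))=1$ does follow from Lemma \ref{lem3}(a) after one application of the recurrence. Your decomposition $(1-x)^{up^{s}}\equiv(1-x^{p^{s}})^{u}+up(1-x^{p^{s}})^{u-1}K(x)\pmod{p^{2}}$ for (c), (d) also works --- the correction term turns out to be $upH(\eps_{s-1})$ times the inner sum, up to units, hence vanishes modulo $p^{2}$ because the inner sum is already divisible by $p$ --- but you have not carried out this verification, and it is exactly the content the paper's single mod-$p^{2}$ computation supplies. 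Once it is done, (e) follows at once from the main formula of Lemma \ref{lem3}, whose right-hand side depends only on $\eps_{s}$ and the first nonzero higher digit; your stated mechanism for (e) (Lucas compatibility of the contributing $\ell$'s) is not the relevant one.

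The one genuine gap is in (f). Your collapsed formula $A(pn+j)\equiv(-1)^{j}\sum_{\ell}(-1)^{\ell}\binom{up^{s}+p-1}{\ell p+j}D_{p}(n-\ell)\pmod{p^{s+1}}$ is correct, but the finishing claim --- that $(-1)^{j}\binom{up^{s}+p-1}{\ell p+j}$ differs from $\binom{up^{s}+p-1}{\ell p}$ by a $p$-adic unit --- does not suffice: that unit depends on $\ell$ (the ratio is independent of $\ell$ only modulo $p^{2}$; the $p^{2}$-coefficient already contains a term proportional to $\ell$), so the sums $S_{j}$ and $S_{0}$ are not unit multiples of one another and need not have equal valuation when that valuation can be as large as $s$. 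What is true, and what rescues the argument, is the \emph{uniform} congruence $\binom{up^{s}+p-1}{\ell p+j}\equiv\binom{p-1}{j}\binom{up^{s}+p-1}{\ell p}\pmod{p^{s}}$, obtained from Vandermonde together with $p^{s}\mid\binom{up^{s}}{b}$ for $p\nmid b$; this gives $S_{j}\equiv\binom{p-1}{j}S_{0}\pmod{p^{s}}$, and the hypothesis $\nu_{p}\le s$ then forces equal valuations by the usual minimum argument. That divisibility $p^{s}\mid\binom{up^{s}}{b}$ is precisely the engine of the paper's proof of (f), so you need it in any case; a ``mod $p^{s+1}$ lift'' of $\binom{p-1}{j}\equiv(-1)^{j}$ is neither available nor what is needed.
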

\begin{proof}
Applying Lemma \ref{lem2} with $k=p^{s}(p-1)$ we get
$$
F_{p}(x)^{p^{s+1}-p^{s}}\equiv (1-x)^{\frac{p^{s+1}-p^{s}}{p-1}}\pmod{p^{s+1}}.
$$
Raising both sides to the power $u$ we get
$$
F_{p}(x)^{up^{s}(p-1)}\equiv (1-x)^{up^{s}}\pmod{p^{s+1}}.
$$
Multiplying both sides of the above congruence by $F_{p}(x)^{1-p}$ we get
$$
F_{p}(x)^{(up^{s}-1)(p-1)}\equiv (1-x)^{up^{s}}F_{p}(x)^{1-p}\pmod{p^{s+1}}.
$$
Equivalently, by using Lemma \ref{lem1} and comparing coefficients on both sides of the above congruence, we get the congruence
$$
A_{p,(p-1)(up^{s}-1)}(n)\equiv \sum_{i=0}^{\op{min}\{n,up^{s}\}}(-1)^{i}\binom{up^{s}}{i}D_{p}(n-i)\pmod{p^{s+1}}.
$$
Let us take $n>up^{s}$ and write
$$
n=\sum_{i=0}^{t}\eps_{i}p^{i}
$$
for some $t\geq s$ and $n_{s}\geq u$. Let us define the numbers $n_{0},\ldots ,n_{t}$ recursively in the following way
\begin{align*}
n_{0}:= & n, \\
n_{i+1}:= & \frac{n_{i}-\eps_{i}}{p}
\end{align*}
for $i=0,1,\ldots,t-1$.

We start by proving the part (f). Let $s\geq 2$. Observe, that the formula
\begin{equation}\label{equbinom}
k\binom{p^{s}}{k}=p^{s}\binom{p^{s}-1}{k-1}\equiv 0\pmod{p^{s}}
\end{equation}
implies that if $p\nmid k$, then $p^{s}|\binom{p^{s}}{k}$. Hence, using Lemma \ref{lem1}, we get
\begin{align*}
\sum_{i=0}^{up^{s}}&(-1)^{i}\binom{up^s}{i}D_{p}(n-i)\\
  &=  \sum_{i=0}^{up^{s}-1}(-1)^{i}\binom{up^s}{i}D_{p}(n-i)-D_{p}(n-up^{s}) \\
  &\equiv \sum_{i=0}^{up^{s-1}-1}(-1)^{ip^{s-1}}\binom{up^s}{ip}D_{p}(n-ip)-D_{p}(n-up^{s}) \\
  &=  \sum_{i=0}^{up^{s-1}-1}(-1)^{i}\binom{up^s}{ip}D_{p}((n_{1}-i)p+\eps_{0})-D_{p}((n_{1}-up^{s-1})p+\eps_{0}) \\
  &=  (-1)^{\eps_{0}}\binom{p-1}{\eps_{0}}\left( \sum_{i=0}^{up^{s-1}-1}(-1)^{i}\binom{up^s}{i}D_{p}(n_{1}-i)-D_{p}(n_{1}-up^{s-1}) \right)\\
  &=  (-1)^{\eps_{0}}\binom{p-1}{\eps_{0}}\left( \sum_{i=0}^{up^{s-1}}(-1)^{i}\binom{up^s}{i}D_{p}(n_{1}-i) \right)\pmod{p^{s}}.
\end{align*}
Thus, if $\nu_{p}(A_{p,(p-1)(up^s-1)}(n))\leq s$, then
$$
\nu_{p}\left(A_{p,(p-1)(up^s-1)}(n)\right)=\op{min}\left\{s,\nu_{p}\left(\sum_{i=0}^{up^{s-1}}(-1)^{i}\binom{up^s}{i}D_{p}(n_{1}-i)\right)\right\},
$$
which does not depend on $\eps_{0}$. Hence we get (f).

In order to prove parts (a) to (e) it is enough to show that there exists a constant $C_{1}$ not divisible by $p$, such that
$$
A_{p,(p-1)(up^s-1)}(n)\equiv C_{1}\left(\sum_{i=0}^{u}(-1)^{i}\binom{u}{i}D_{p}(n-i)\right)\pmod{p^{2}},
$$
and then apply Lemma \ref{lem3}.

From the formula (\ref{equbinom}) we see that the condition $p^{s-1}\nmid k$ implies $p^{2}|\binom{p^{s}}{k}$. We proceed similarly as in the proof of the part (f). Indeed, we have
\begin{align*}
\sum_{i=0}^{up^{s}}(-1)^{i}\binom{up^s}{i} & D_{p}(n-i)\equiv C_{2}\left( \sum_{i=0}^{up}(-1)^{i}\binom{up^s}{ip^{s-1}}D_{p}(n_{s-1}-i) \right)\pmod{p^{2}},
\end{align*}
with
$$
C_{2}=\prod_{i=0}^{s-2}(-1)^{\eps_{i}}\binom{p-1}{\eps_{i}}.
$$
In the case of $s=1$, the starting sum is of the same form with $C_{2}=1$. It is clear that $C_{2}\not\equiv 0\pmod p$. We need to investigate the expression in the brackets.

Observe that if $p$ does not divide $i$, then $p|\binom{up^{s}}{ip^{s-1}}$. We also have
$$
D_{p}(n)\equiv 1\pmod p
$$
and
$$
\binom{up^{s}}{ip^{s}}\equiv\binom{u}{i}\pmod p.
$$
The last equivalence is a consequence of Lucas theorem.
Hence
\begin{align*}
\sum_{i=0}^{up}(-1)^{i}&\binom{up^s}{ip^{s-1}}D_{p}(n_{s-1}-i)\\
&=  \sum_{\substack{i=0 \\ p\nmid i}}^{up}(-1)^{i}\binom{up^s}{ip^{s-1}}D_{p}(n_{s-1}-i)+\sum_{i=0}^{u}(-1)^{ip}\binom{up^s}{ip^{s}}D_{p}(n_{s-1}-ip) \\
 &=  p\sum_{\substack{i=0 \\ p\nmid i}}^{up}(-1)^{i}\frac{1}{p}\binom{up^s}{ip^{s-1}}D_{p}(n_{s-1}-i)+\sum_{i=0}^{u}(-1)^{i}\binom{up^s}{ip^{s}}D_{p}(n_{s-1}-ip) \\
 &\equiv  p\sum_{\substack{i=0 \\ p\nmid i}}^{up}(-1)^{i}\frac{1}{p}\binom{up^s}{ip^{s-1}}+\sum_{i=0}^{u}(-1)^{i}\binom{up^s}{ip^{s}}D_{p}(n_{s-1}-ip) \\
 &=  \sum_{\substack{i=0 \\ p\nmid i}}^{up}(-1)^{i}\binom{up^s}{ip^{s-1}}+\sum_{i=0}^{u}(-1)^{i}\binom{up^s}{ip^{s}}D_{p}(n_{s-1}-ip) \\
 &=  \sum_{i=0}^{up}(-1)^{i}\binom{up^s}{ip^{s-1}}+\sum_{i=0}^{u}(-1)^{i}\binom{up^s}{ip^{s}}(D_{p}(n_{s-1}-ip)-1) \\
 &\equiv  \sum_{i=0}^{up^{s}}(-1)^{i}\binom{up^s}{i}+p\sum_{i=0}^{u}(-1)^{i}\binom{up^s}{ip^{s}}\frac{D_{p}(n_{s-1}-ip)-1}{p} \\
 &\equiv  0+ p\sum_{i=0}^{u}(-1)^{i}\binom{u}{i}\frac{D_{p}(n_{s-1}-ip)-1}{p} \\
 &=  \sum_{i=0}^{u}(-1)^{i}\binom{u}{i}(D_{p}(n_{s-1}-ip)-1) \\
 &=  \sum_{i=0}^{u}(-1)^{i}\binom{u}{i}D_{p}(n_{s-1}-ip)-\sum_{i=0}^{u}(-1)^{i}\binom{u}{i} \\
 &=  \sum_{i=0}^{u}(-1)^{i}\binom{u}{i}D_{p}((n_{s}-i)p+\eps_{s-1})-0 \\
 &=  (-1)^{\eps_{s-1}}\binom{p-1}{\eps_{s-1}}\sum_{i=0}^{u}(-1)^{i}\binom{u}{i}D_{p}(n_{s}-i) \pmod{p^{2}}. \\
\end{align*}
Thus, we get the congruence
$$
A_{p,(p-1)(up-1)}(n)\equiv C_{1} \left(\sum_{i=0}^{u}(-1)^{i}\binom{u}{i}D_{p}(n_{s}-i)\right)\pmod{p^{2}}
$$
with
$$
C_{1}=\prod_{i=0}^{s-1}(-1)^{\eps_{i}}\binom{p-1}{\eps_{i}}\not\equiv 0\pmod p.
$$
The result now follows from Lemma \ref{lem2}.
\end{proof}

We have already proved a quite strong result concerning the behaviour of the $p$-adic valuation of $A_{p,k}(n)$ for certain exponents $k$. The question arises: can we prove something in the opposite direction? More precisely: is it possible, given any $s\in\N_{+}$, to construct an exponent $k\in\N_{+}$ such that the congruence $A_{p,k}(n)\equiv 0\pmod{p^{s}}$ has infinitely many solutions?

We are ready to investigate more closely the set
$$
\mathcal{E}_{p}:=\{s\in\N:\ \text{there exists $k\in\N$ such that } A_{p,k}(n)\equiv 0\pmod{p^{s}}\ \text{for infinitely many $n\in\N$}\}.
$$

From Lemma \ref{lem2} we immediately deduce that $\mathcal{E}_{p}=\N$. However, for $s\in\N_{+}$, the exponents $k$ in this case are divisible by $p$. Can we produce exponents without this property? The answer to this question is contained in the following

\begin{thm}\label{mainthm2}
Let $k\in\N_{+}, p\in\mathbb{P}_{\geq 3}$ and suppose that $p^2(p-1)|k$ and $r\in\{1,\ldots ,p-2\}$.

Then, there are infinitely many $n\in\N_{+}$ such that
$$
\nu_{p}(A_{p,k-r}(n))\geq \nu_{p}(k).
$$
\end{thm}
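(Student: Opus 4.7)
The plan is to derive, by a clean sequence of manipulations of generating functions, the congruence
\[
F_{p}(x)^{k-r}\equiv (1-x)^{r}(1-x^{p})^{p^{s-1}q}F_{p}(x^{p})^{-r}\pmod{p^{s}},
\]
where $s:=\nu_{p}(k)\ge 2$ and $M:=k/(p-1)=p^{s}q$ with $\gcd(q,p)=1$ (note $\nu_{p}(M)=s$ since $\nu_{p}(p-1)=0$). From this congruence the conclusion will fall out immediately by inspecting a single coefficient. First I would invoke Lemma \ref{lem2} with the exponent $k$, which is divisible by $p-1$: it yields $F_{p}(x)^{k}\equiv (1-x)^{M}\pmod{p^{s+1}}$. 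The identity $F_{p}(x)^{-1}=(1-x)F_{p}(x^{p})^{-1}$, immediate from the defining product of $F_{p}$, gives $F_{p}(x)^{-r}=(1-x)^{r}F_{p}(x^{p})^{-r}$, and multiplying these together produces
\[
F_{p}(x)^{k-r}\equiv (1-x)^{M+r}F_{p}(x^{p})^{-r}\pmod{p^{s+1}}.
\]

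Next I would apply the elementary lifting principle ``$a\equiv b\pmod p \Rightarrow a^{p^{t}}\equiv b^{p^{t}}\pmod{p^{t+1}}$'' (the same tool recalled inside the proof of Lemma \ref{lem2}) to the tautology $(1-x)^{p}\equiv 1-x^{p}\pmod p$, obtaining $(1-x)^{p^{s}}\equiv (1-x^{p})^{p^{s-1}}\pmod{p^{s}}$. Raising to the $q$-th power and multiplying by $(1-x)^{r}$ gives
\[
(1-x)^{M+r}\equiv (1-x)^{r}(1-x^{p})^{p^{s-1}q}\pmod{p^{s}},
\]
and substitution into the previous display produces the target congruence announced at the start.

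Finally I would observe that $G(x):=(1-x^{p})^{p^{s-1}q}F_{p}(x^{p})^{-r}$ is a power series in $x^{p}$ alone, while $(1-x)^{r}=\sum_{j=0}^{r}(-1)^{j}\binom{r}{j}x^{j}$ is a polynomial of degree $r\le p-2$. Extracting the coefficient of $x^{n}$ gives
\[
A_{p,k-r}(n)\equiv \sum_{j=0}^{r}(-1)^{j}\binom{r}{j}[x^{n-j}]G(x)\pmod{p^{s}},
\]
and $[x^{n-j}]G(x)=0$ whenever $p\nmid n-j$. If $n\bmod p\in\{r+1,\ldots ,p-1\}$, a nonempty set because $r\le p-2$, then no $j\in\{0,\ldots ,r\}$ satisfies $n\equiv j\pmod p$, every term vanishes, and $\nu_{p}(A_{p,k-r}(n))\ge s=\nu_{p}(k)$. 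There are infinitely many such $n$. The only real obstacle is keeping the modular book-keeping honest: the first congruence holds modulo $p^{s+1}$, while the freshman's-dream step preserves only modulo $p^{s}$, so one power of $p$ is lost along the way---but this is exactly the strength required for the desired bound, and no input deeper than Lemma \ref{lem2} and the elementary lifting principle is needed.
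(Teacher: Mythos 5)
Your argument is correct and follows essentially the same route as the paper: both start from Lemma \ref{lem2}, lose exactly one power of $p$ in passing from $(1-x)^{k/(p-1)}$ to a series supported on multiples of $p$, and conclude by choosing $n\equiv j\pmod{p}$ with $j\in\{r+1,\ldots,p-1\}$. The only difference is presentational: the paper works coefficient-by-coefficient, using the recursion $D_{p,r}(pm+j)=(-1)^{j}\binom{r}{j}D_{p,r}(m)$ and the divisibility $p^{\nu}\mid\binom{\delta p^{\nu}}{i}$ for $p\nmid i$, whereas you encode these same two facts as the power-series identities $F_{p}(x)^{-r}=(1-x)^{r}F_{p}(x^{p})^{-r}$ and $(1-x)^{p^{s}q}\equiv(1-x^{p})^{p^{s-1}q}\pmod{p^{s}}$.
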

\begin{proof}

From the assumption $p-1|k$ and the congruence $F_{p}(x)^{p-1}\equiv 1-x\pmod{p}$ we get
$$
F_{p}(x)^{k}\equiv (1-x)^{\frac{k}{p-1}}\pmod{p^{\nu_{p}(k)+1}}.
$$
Multiplying both sides of the above congruence by $F(x)^{-r}$ we obtain
$$
F_{p}(x)^{k-r}\equiv (1-x)^{\frac{k}{p-1}}F_{p}(x)^{-r}\pmod{p^{\nu_{p}(k)+1}}.
$$
Write $k/(p-1)=\delta p^{\nu}$ for some $\delta\in\N_{+}$, where $\nu=\nu_{p}(k)\geq 2$. Hence, comparing coefficients on both sides of the above congruence we get the congruence
$$
A_{p,k-r}(n)\equiv \sum_{i=0}^{\op{min}\{n,\delta p^{\nu}\}}(-1)^{i}\binom{\delta p^{\nu}}{i}D_{p,r}(n-i)\pmod{p^{\nu+1}}.
$$
Let $n>\delta p^{\nu}$ and write $n=pn'+j$ for some $n'\in\N$ and $j\in\{0,\ldots,p-1\}$. Let us observe that if $p\nmid i$, then $p^{\nu}|\binom{\delta p^{\nu}}{i}$. Thus
\begin{align*}
A_{p,k-r}(n)\equiv & \sum_{i=0}^{\delta p^{\nu}}(-1)^{i}\binom{\delta p^{\nu}}{i}D_{p,r}(n-i) \\
 \equiv & \sum_{i=0}^{\delta p^{\nu -1}}(-1)^{i}\binom{\delta p^{\nu}}{ip}D_{p,r}(n-ip) \\
 = & \sum_{i=0}^{\delta p^{\nu -1}}(-1)^{i}\binom{\delta p^{\nu}}{ip}D_{p,r}((n'-i)p+j) \\
 = & (-1)^{j}\binom{r}{j}\sum_{i=0}^{\delta p^{\nu -1}}(-1)^{i}\binom{\delta p^{\nu}}{ip}D_{p,r}(n'-i) \pmod{p^{\nu}}.
\end{align*}
In consequence, $A_{p,k-r}(n)\equiv 0\pmod{p^{\nu}}$ for any $n>\delta p^{\nu}=k/(p-1)$ satisfying $n\equiv j\pmod p$, where $j\in\{r+1,\ldots,p-1\}$ is arbitrarily chosen. The proof is complete.
\end{proof}

\begin{rem}
{\rm The above result can be seen as a (weak) generalization of the result obtained by R{\o}dseth and Sellers in \cite{RodSel}, which says that for any odd $m$ and $s\in\N_{+}$, the congruence $a_{m}(n)\equiv 0\pmod{m^{s}}$ has infinitely many solutions in $n$. The weakness of our result is in the dependence of the number of colors and the exponent~$s$.}
\end{rem}

From Lemma \ref{lem2} and the proof of our theorem we can easily deduce the following

\begin{cor}
Let $k\in\N_{+}$ and suppose that the sequence $(\nu_{p}(A_{p,k}(n)))_{n\in\N}$ is eventually constant and equal to $1$.
\begin{enumerate}
\item[(a)] If $p-1|k$, then $p\nmid k$.
\item[(b)] If $p-1\nmid k$, $k=(p-1)k'+q$ for some $q\in\{1,\ldots ,p-2\}$, and $p|k'+1$, then $p^{2}\nmid k'+1$.
\end{enumerate}
\end{cor}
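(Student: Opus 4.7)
Both parts of the corollary can be proved by contradiction, each reducing to a direct application of one earlier result. The plan is to use Lemma \ref{lem2} for part (a) and Theorem \ref{mainthm2} for part (b).

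For part (a), I would assume that $p-1 \mid k$ and, for contradiction, $p \mid k$, so $\nu_{p}(k) \geq 1$. Lemma \ref{lem2} then gives
$$
F_{p}(x)^{k}\equiv (1-x)^{k/(p-1)}\pmod{p^{\nu_{p}(k)+1}},
$$
and comparing coefficients at any $n > k/(p-1)$ yields $A_{p,k}(n) \equiv 0 \pmod{p^{\nu_{p}(k)+1}}$, hence $\nu_{p}(A_{p,k}(n)) \geq 2$ for all sufficiently large $n$. This contradicts the assumption that the sequence is eventually constant and equal to $1$.

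For part (b), the key step is to perturb $k$ so that Theorem \ref{mainthm2} becomes applicable. Given the decomposition $k=(p-1)k'+q$ with $q\in\{1,\ldots,p-2\}$, I would set $r := p-1-q$, which automatically lies in $\{1,\ldots,p-2\}$, and
$$
K := k + r = (p-1)(k'+1).
$$
Now suppose for contradiction that $p^{2} \mid k'+1$. Then $\nu_{p}(K) = \nu_{p}(k'+1) \geq 2$ and $p^{2}(p-1) \mid K$, so the hypotheses of Theorem \ref{mainthm2} are satisfied for the exponent $K$ and the integer $r$. The conclusion of that theorem produces infinitely many $n$ with
$$
\nu_{p}(A_{p,K-r}(n)) \geq \nu_{p}(K) \geq 2,
$$
and since $K - r = k$, this contradicts eventual constancy of $(\nu_{p}(A_{p,k}(n)))_{n\in\N}$ at $1$.

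I do not expect any genuine obstacle here: both steps are short deductions from results already proved. The only point requiring care is the parameter bookkeeping in part (b) — checking that $r = p-1-q$ is in the allowed range $\{1,\ldots,p-2\}$, that $K$ is divisible by $p-1$, and that $\nu_{p}(K) = \nu_{p}(k'+1)$ — all of which are immediate from the construction.
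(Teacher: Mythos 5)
Your proposal is correct and matches the paper's intended argument: the paper leaves the corollary as an easy deduction from Lemma \ref{lem2} (for part (a)) and Theorem \ref{mainthm2} (for part (b)), which is exactly what you carry out, with the right bookkeeping $r=p-1-q$ and $K=k+r=(p-1)(k'+1)$.
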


\section{Two conjectures and a question}\label{sec4}

Our Theorem \ref{mainthm} implies boundedness of the sequence $(\nu_{p}(A_{p,k}(n)))_{n\in\N}$ with $k=(p-1)(p^s-1)$ and $s\in\N_{+}$. It is natural to ask whether the founded family of exponents is the only one with this property. In order to get some feelings about this question, we performed numerical search with primes $p\in\{3,5,7,11\}$, the exponents $k\leq 100$ and $n\leq 10^4$. Our numerical search motivated the following

\begin{conj}
Let $p\in\mathbb{P}_{\geq 3}, u\in\{2,\ldots,p-1\}$ and $s\in\N_{+}$. Then, for $n\geq up^{s}$ we have
$$
\nu_{p}(A_{p,(p-1)(up^s-1)}(n))\in\{1,2\}.
$$
Moreover, for each $n\in\N_{+}$ we have the equalities
$$
\nu_{p}(A_{p,(p-1)(up^s-1)}(pn))=\nu_{p}(A_{p,(p-1)(up^s-1)}(pn+1))=\ldots=\nu_{p}(A_{p,(p-1)(up^s-1)}(pn+p-1)).
$$
\end{conj}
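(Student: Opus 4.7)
The plan is to refine the mod-$p^2$ analysis from the proof of Theorem \ref{mainthm} to a mod-$p^3$ analysis. This strengthening is needed for two reasons: to upgrade the lower bound $\nu_p(A_{p,(p-1)(up^s-1)}(n))\geq 1$ to the sharp range $\nu_p(A)\in\{1,2\}$, and to extend the block equality of Theorem \ref{mainthm}(f) (which requires $\nu_p(A)\leq s$, hence $s\geq 2$) to the missing case $s=1$.

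First I would refine the starting congruence. Writing $F_p(x)^{p-1}=1-x+pH(x)$ for some $H\in\Z[[x]]$, a binomial expansion of $(1-x+pH)^{up^s}$ shows that the term with binomial index $1$ contributes $up^{s+1}H(1-x)^{up^s-1}$, while all higher-index terms vanish modulo $p^3$. Hence
\begin{equation*}
F_p(x)^{(p-1)up^s}\equiv (1-x)^{up^s}+\delta_{s,1}\cdot up^{2}H(x)(1-x)^{up-1}\pmod{p^3},
\end{equation*}
the correction being absent for $s\geq 2$. Multiplying by $F_p(x)^{1-p}$ and comparing coefficients via Lemma \ref{lem1} gives a mod-$p^3$ congruence
\begin{equation*}
A_{p,(p-1)(up^s-1)}(n)\equiv\sum_{i=0}^{\min(n,up^s)}(-1)^i\binom{up^s}{i}D_p(n-i)+\delta_{s,1}\,p^2\,T(n)\pmod{p^3},
\end{equation*}
with $T(n)$ an explicit expression coming from the correction term.

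Second, I would repeat the telescoping argument from the proof of Theorem \ref{mainthm}, this time mod $p^3$, to reduce the principal sum to an expression in $E(m):=\sum_{i=0}^u(-1)^i\binom{u}{i}D_p(m-i)$ evaluated at $n_s$. Since $p^{s-\nu_p(i)}$ divides $\binom{up^s}{i}$, and thanks to Lucas' theorem for the leading coefficients $\binom{up^s}{ip^s}\equiv\binom{u}{i}\pmod{p}$, careful tracking of cross-terms should yield
\begin{equation*}
A_{p,(p-1)(up^s-1)}(n)\equiv C_1\cdot E(n_s)+p^2\cdot R(n)\pmod{p^3},
\end{equation*}
where $C_1=\prod_{i=0}^{s-1}(-1)^{\eps_i}\binom{p-1}{\eps_i}$ is a $p$-unit and $R(n)$ is an explicit remainder depending on $\eps_0,\ldots,\eps_{s-1}$ and the low base-$p$ digits of $n_s$.

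From this expression I would extract both claims. When $\nu_p(E(n_s))=1$, the leading term already forces $\nu_p(A)=1$. When $\nu_p(E(n_s))\geq 2$, Lemma \ref{lem3}(c) pinpoints the required shape of $n_s$, and it suffices to show that $C_1 E(n_s)/p^2+R(n)$ is a $p$-unit; this yields $\nu_p(A)=2$ and hence the bound $\nu_p(A)\in\{1,2\}$. For the block equality, note that in the refined congruence the digit $\eps_0$ enters only through the unit factor $(-1)^{\eps_0}\binom{p-1}{\eps_0}$ inside $C_1$ (and, in the $s=1$ case, inside $R(n)$ in the same multiplicative way), so $\nu_p(A)$ is constant on each block $\{pn,\ldots,pn+p-1\}$ and the claim follows.

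The main obstacle is the non-vanishing analysis $C_1 E(n_s)/p^2+R(n)\not\equiv 0\pmod{p}$. This requires computing the second-order term of $E(n_s)$ beyond what Lemma \ref{lem3} records, i.e.\ expanding $(p-k)\binom{p+u-1}{j}+k\binom{p+u-1}{p+j}$ modulo $p^3$ via the congruence $\binom{p+u-1}{p+j}\equiv\binom{p+u-1}{j}\pmod{p}$ and its lift mod $p^2$, and then combining with the explicit form of $R(n)$. The analysis has to be carried out case by case according to the digit configurations of $n_s$ identified by Lemma \ref{lem3}(c); the case $s=1$ is additionally delicate because the correction $up^2 H(x)(1-x)^{up-1}$ contributes a nontrivial term that must be quantified and does not appear for $s\geq 2$. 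Finally, the block equality for very small $n$ (where the block may straddle $up^s$) would need either direct verification of the few affected residues or a version of the telescoping valid for short sums.
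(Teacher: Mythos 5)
This statement is stated in the paper as an open conjecture, supported only by numerical evidence; the paper offers no proof of it, so there is nothing to compare your argument against except the partial results of Theorem \ref{mainthm}, which establish only $\nu_{p}(A_{p,(p-1)(up^s-1)}(n))\geq 1$, the existence of infinitely many $n$ with valuation $1$ and infinitely many with valuation $\geq 2$, and the block equality conditionally on $s\geq 2$ and $\nu_{p}(A)\leq s$.

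Your proposal is a reasonable program but it is not a proof: the decisive step is missing. Everything up to the congruence $A_{p,(p-1)(up^s-1)}(n)\equiv C_{1}E(n_{s})+p^{2}R(n)\pmod{p^{3}}$ is a plausible (if unverified) extension of the paper's telescoping argument, and your bookkeeping of the correction term $up^{2}H(x)(1-x)^{up-1}$ for $s=1$ looks right. But the entire content of the conjecture's first claim sits in the assertion that when $\nu_{p}(E(n_{s}))\geq 2$ the quantity $C_{1}E(n_{s})/p^{2}+R(n)$ is a $p$-adic unit. You label this the ``main obstacle'' and do not resolve it; you have therefore reduced the conjecture to another unproven non-vanishing statement of comparable difficulty, with no argument why it should hold uniformly over all digit configurations singled out by Lemma \ref{lem3}(c). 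Nothing in the structure of the problem rules out a second-order cancellation producing $\nu_{p}(A)\geq 3$ for some configuration --- excluding exactly that possibility is what the conjecture asserts, and is presumably why the authors could not prove it. The same gap affects the block equality: for $s=1$ it hinges on $R(n)$ depending on $\eps_{0}$ only through the unit $(-1)^{\eps_{0}}\binom{p-1}{\eps_{0}}$, which you assert ``in the same multiplicative way'' without computing $R(n)$. As it stands the proposal identifies the right refinement to attempt but does not close the argument.
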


The above conjecture was suggested by our numerical observations. Moreover, the form of the conjecture motivated the parts (c) to (f) in Theorem \ref{mainthm}.

Let $k\in\N_{\geq 2}$ be given. We say that the sequence ${\bf \eps}=(\eps_{n})_{n\in\N}$ is $k$-automatic  if and only if the following set
$$
K_{k}({\bf \eps})=\{(\eps_{k^{i}n+j})_{n\in\N}:\;i\in\N\;\mbox{and}\;0\leq j<k^{i}\},
$$
called the $k$-kernel of ${\bf \eps}$, is finite. In the case of $p=2$ we know that the sequence $(\nu_{2}(A_{2,2^s-1}(n)))_{n\in\N}$ is 2-automatic (and it is not eventually periodic). In Theorem \ref{mainthm} we proved that the sequence $(\nu_{p}(A_{p,k}(n)))_{n\in\N}$ for $k=(p-1)(p^s-1)$ with $p\geq 3$, is eventually constant and hence $k$-automatic for any $k$.

We calculated the first $10^5$ elements of the sequence $(\nu_{p}(A_{p,(p-1)(up^s-1)}(n)))_{n\in\N}$ for any $p\in\{3, 5, 7\}, s\in\{1,2\}$ and $u\in\{1,\ldots,p-1\}$ and were not able to spot any general relations. Our numerical observations lead us to the following

\begin{ques}
For which $p\in\mathbb{P}_{\geq 5}, s\in\N$ and $u\in\{2,\ldots, p-1\}$, the sequence $(\nu_{p}(A_{p,(p-1)(up^s-1)}(n)))_{n\in\N}$ is $k$-automatic for some $k\in\N_{+}$?
\end{ques}

Finally, we formulate the following

\begin{conj}
Let $k\in\N_{+}, p\in\mathbb{P}$ and suppose that $k$ is not of the form $(p-1)(up^s-1)$ for $s\in\N$ and $u\in\{1,\ldots,p-1\}$. Then, the sequence
$(\nu_{p}(A_{p,k}(n)))_{n\in\N}$ is unbounded.
\end{conj}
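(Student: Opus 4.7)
The plan is to split on the residue $r := k \bmod (p-1)$ and, when $r = 0$, on the base-$p$ expansion of $q := k/(p-1)$. The assumption that $k$ is not of the form $(p-1)(up^{s} - 1)$ translates to: either $r \neq 0$, or $r = 0$ and $q + 1$ has at least two nonzero base-$p$ digits. In both cases, the aim is to produce, for every $M \in \N_{+}$, infinitely many $n$ with $\nu_{p}(A_{p,k}(n)) \geq M$.

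First I would treat the case $r = 0$, so $k = (p-1)q$. Lemma \ref{lem2} already gives
$$
F_{p}(x)^{k} \equiv (1-x)^{q} \pmod{p^{\nu_{p}(q)+1}},
$$
whence $\nu_{p}(A_{p,k}(n)) \geq \nu_{p}(q) + 1$ for all $n > q$; this is only a fixed lower bound, so the real work is to push the modulus higher. I would refine Lemma \ref{lem2} by writing $F_{p}(x)^{p-1} = (1-x) + p G_{1}(x)$ with $G_{1} \in \Z[[x]]$ and raising to the $p^{\nu}$-th power, controlling the result through the valuations $\nu_{p}\bigl(\binom{p^{\nu}}{i}\bigr) = \nu - \nu_{p}(i)$, which follow from Legendre's formula. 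Iterating this yields an expansion of $F_{p}(x)^{k}$ modulo $p^{M}$ whose shape reflects the base-$p$ digits of $q + 1$. When $q + 1 = u_{1} p^{s_{1}} + u_{2} p^{s_{2}} + \cdots$ has at least two nonzero digits, the corresponding blocks in the expansion interfere nontrivially, and one should be able to select $n$ with a prescribed base-$p$ pattern so that the leading terms cancel, yielding $\nu_{p}(A_{p,k}(n)) \geq M$.

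The case $r \neq 0$ I would handle by a variant of the argument used in the proof of Theorem \ref{mainthm2}. Starting from $F_{p}(x)^{k} \equiv (1-x)^{q} F_{p}(x)^{r} \pmod{p^{\nu_{p}(q)+1}}$, one writes the right-hand side explicitly via Lemma \ref{lem1} and chooses $n$ of the form $n = n_{0} p + j$ with $j \in \{r+1,\ldots,p-1\}$ (exactly as in the proof of Theorem \ref{mainthm2}) to annihilate the contribution of $F_{p}(x)^{r}$ modulo $p$. To upgrade this into unboundedness of $\nu_{p}$ for a fixed $k$, one then exploits the self-similarity $F_{p}(x) = F_{p}(x^{p})/(1-x)$, expressing $A_{p,k}(p^{t} n_{0} + j)$ in terms of $A_{p,k}(n_{0})$ plus lower-order errors, and iterates on $t$ so that the error terms accumulate only finitely many times while the main term is forced into arbitrarily high powers of $p$.

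The main obstacle is the sub-case $r = 0$ with $q + 1$ having exactly two nonzero base-$p$ digits. Here neither Lemma \ref{lem2} alone nor the argument behind Theorem \ref{mainthm2} is directly sufficient: one needs a quantitative understanding of the correction series $G_{1}(x), G_{2}(x), \ldots$ in $F_{p}(x)^{p-1} = (1-x) + p G_{1}(x) + p^{2} G_{2}(x) + \cdots$, which themselves are partition-theoretic in nature. A successful argument will either provide a closed-form description of $G_{1} \bmod p$ in terms of simpler partition generating functions, or use a pigeonhole argument over base-$p$ patterns of $n$ to force accidental vanishings. Demonstrating that the boundedness of $\nu_{p}(A_{p,k}(n))$ would imply an algebraic identity satisfied only within the excluded family is the heart of the conjecture.
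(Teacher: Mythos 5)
The statement you are addressing is stated in the paper as a conjecture, not a theorem: the authors give no proof, only numerical evidence (primes $p\in\{3,5,7,11\}$, exponents $k\leq 100$, $n\leq 10^{4}$), so there is no argument of theirs to compare against and your proposal must stand on its own. As written it is a plan of attack rather than a proof, and each branch stops exactly where the difficulty begins. Your reduction of the hypothesis is correct: writing $k=(p-1)q+r$, the excluded family is precisely the case $r=0$ with $q+1$ having a single nonzero base-$p$ digit. But in the case $r=0$, Lemma \ref{lem2} yields only the \emph{fixed} lower bound $\nu_{p}(A_{p,k}(n))\geq \nu_{p}(q)+1$, and your proposed refinement via $F_{p}(x)^{p-1}=(1-x)+pG_{1}(x)+\cdots$ is never carried out: the assertion that when $q+1$ has two nonzero digits ``the blocks interfere'' and one ``should be able to select $n$ so that the leading terms cancel'' is a restatement of the conjecture, not an argument. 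No mechanism is exhibited that forces cancellation to order $p^{M}$ for every $M$, and you concede as much in your closing paragraph.

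The case $r\neq 0$ has the same structural flaw. The congruence $F_{p}(x)^{k}\equiv(1-x)^{q'}F_{p}(x)^{-r'}\pmod{p^{\nu+1}}$ underlying Theorem \ref{mainthm2} holds only modulo a power of $p$ determined by $k$; within that congruence the choice $n\equiv j\pmod p$ with $\binom{r}{j}=0$ kills the coefficient modulo $p^{\nu}$ and nothing more. To certify valuations beyond $\nu_{p}(q)+1$ for a \emph{fixed} $k$ you would need to control the error term of that congruence, i.e.\ the genuinely partition-theoretic correction series, and your proposal to ``iterate on $t$ so that the error terms accumulate only finitely many times'' is not justified by anything in the paper or in your sketch --- there is no reason offered why the errors at level $p^{\nu_{p}(q)+1}$ should cooperate at each stage of the iteration. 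In short, both branches reduce the problem to exhibiting unbounded cancellation in correction terms that neither Lemma \ref{lem1}, Lemma \ref{lem2}, nor the method of Theorem \ref{mainthm2} gives any handle on; the statement remains, as the paper presents it, an open conjecture.
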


\bigskip

\noindent {\bf Acknowledgments.}
The authors express their gratitude to the referee for careful reading of the manuscript
and valuable suggestions, which improved the quality of the paper.

\vskip 1cm

\noindent Maciej Ulas, Jagiellonian University, Faculty of Mathematics and Computer Science, Institute of
Mathematics, {\L}ojasiewicza 6, 30-348 Krak\'ow, Poland; email:
maciej.ulas@uj.edu.pl

\bigskip

\noindent B{\l}a\.{z}ej \.{Z}mija, Jagiellonian University, Faculty of Mathematics and Computer Science, Institute of
Mathematics, {\L}ojasiewicza 6, 30-348 Krak\'ow, Poland; email:
blazejz@poczta.onet.pl

\end{document}